\documentclass[12pt,reqno]{amsart}
\usepackage{fullpage}
\usepackage{times}
\usepackage{colonequals}
\usepackage{amsmath,amssymb,amsthm,url}
\usepackage[utf8]{inputenc}
\usepackage[english]{babel}
\usepackage{comment}
\usepackage{bbm}
\usepackage{enumerate}
\usepackage{bm}
\usepackage{graphicx}
\usepackage{mathrsfs}
\usepackage[colorlinks=true, pdfstartview=FitH, linkcolor=blue, citecolor=blue, urlcolor=blue]{hyperref}

\newtheorem{thm}{Theorem}[section]
\newtheorem{lem}[thm]{Lemma}

\newtheorem{claim}[thm]{Claim}

\theoremstyle{definition}
\newtheorem{defn}[thm]{Definition}
\newtheorem{question}[thm]{Question}

\newtheorem{rem}[thm]{Remark}

\newcommand{\F}{\mathbb{F}}
\newcommand{\disc}{\operatorname{disc}}

\newenvironment{poc}{\begin{proof}[Proof of the claim]}{\end{proof}}

\title{Mutual position of two smooth quadrics over finite fields}
\author{Shamil Asgarli}
\address{Department of Mathematics \& Computer Science \\ Santa Clara University \\ CA 95050 \\ USA}
\email{sasgarli@scu.edu}

\author{Chi Hoi Yip}
\address{School of Mathematics\\ Georgia Institute of Technology\\ Atlanta, GA 30332\\ United States}
\email{cyip30@gatech.edu}
\subjclass[2020]{Primary: 51E15, 14G15; Secondary: 15A63, 14J70, 11T24}
\keywords{intersection pattern, quadric, quadratic form, finite projective spaces, character sum}

\begin{document}

\begin{abstract} 
Given two irreducible conics $C$ and $D$ over a finite field $\mathbb{F}_q$ with $q$ odd, we show that there are $q^2/4+O(q^{3/2})$ points $P$ in $\mathbb{P}^2(\mathbb{F}_q)$ such that $P$ is external to $C$ and internal to $D$. This answers a question of Korchm\'{a}ros. We also prove the analogous result for higher-dimensional smooth quadric hypersurfaces in $\mathbb{P}^{n-1}$ with $n$ odd, where the answer is $q^{n-1}/4+O(q^{n-\frac{3}{2}})$.
\end{abstract}

\maketitle

\section{Introduction}

Combinatorics over finite projective planes often involves counting the number of points in the intersection of two geometrically defined subsets. Such subsets include arcs, blocking sets, subplanes, and ovals; when the plane has an odd order, we can also consider a set of external and internal points to an oval. Determining nontrivial estimates of the intersection size is challenging due to the dependency on the relative positions of the two subsets. When working over the projective plane $\mathbb{P}^2(\F_q)$ over a finite field $\F_q$, one can sometimes apply algebraic geometry in positive characteristic to tackle specific counting problems \cite{H79, S97}. 
We refer to related works \cite{AFKL11, DD10, DDk09, PL23}. Many of these concepts have their natural analogs in higher dimensions \cite{AG14, AGY23, CP15}. 

In the present paper, we focus on the problem of counting special points relative to two quadric hypersurfaces (defined by two quadratic forms in $n$ variables) viewed as subsets of $\mathbb{P}^{n-1}$. The following paragraph describes the planar case $n=3$ as a motivation.

Let $q$ be an odd prime power. Let $C\subset \mathbb{P}^2$ be a smooth conic defined over a finite field $\F_q$. Given a point $P\in \mathbb{P}^2(\F_q)$, there are exactly three possibilities:
\begin{enumerate}[(a)]
\item \label{enum:on-curve} $P\in C$ and $C$ admits exactly one tangent line passing through $P$.
\item \label{enum:external} $P\notin C$ and $C$ admits two tangent $\F_q$-lines passing through $P$.
\item \label{enum:internal} $P\notin C$ and $C$ admits no tangent $\F_q$-lines passing through $P$.
\end{enumerate}
We call $P$ an \emph{external point} to $C$ if it satisfies~\eqref{enum:external}, and call $P$ an \emph{internal point} to $C$ if it satisfies~\eqref{enum:internal}. Visualizing a conic as a circle in the Euclidean plane $\mathbb{R}^2$ justifies the names ``internal point" and ``external point". Indeed, no real tangent line passes through a point $P$ inside the circle, and exactly two real tangents to $C$ pass through a point $P$ outside the circle.

Korchm\'{a}ros \cite[Problem 6.2]{K13} asked the following natural question.

\begin{question}\label{quest:korchmaros}
Given two distinct irreducible (equivalently, smooth) plane conics $C$ and $D$ with $q$ odd, how many points in $\mathbb{P}^2(\F_q)$ are external to $C$ but internal to $D$?
\end{question}

In this paper, we provide an answer to Question~\ref{quest:korchmaros}.

\begin{thm}\label{thm:main-conics}
Let $q$ be an odd prime power. Let $C, D$ be two distinct irreducible conics defined over $\F_q$. The number of points $P$ in $\mathbb{P}^2(\F_q)$ external to $C$ but internal to $D$ is $\frac{q^2}{4} + O(q^{3/2})$.
\end{thm}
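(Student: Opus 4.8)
The plan is to translate the geometric external/internal conditions into values of the quadratic character $\chi$ of $\F_q^{\times}$ (extended by $\chi(0)=0$), and then reduce the count, call it $N$, to three character sums, only the last of which is genuinely difficult.

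First I would record the pointwise criterion. Write $C\colon Q_C(v)=v^{T}A_Cv=0$ and $D\colon Q_D(v)=v^{T}A_Dv=0$ with symmetric nondegenerate matrices $A_C,A_D$. For $P=[v]\notin C$, the tangent lines from $P$ meet $C$ at the points of $\ell_P\cap C$, where $\ell_P$ is the polar line of $P$; since $P\notin C$ there is an orthogonal splitting $\F_q^3=\langle v\rangle\oplus\ell_P$ with respect to $A_C$, so $\det A_C\equiv Q_C(v)\cdot\det(Q_C|_{\ell_P})$ modulo squares, and $\ell_P$ is secant (resp.\ external) to $C$ exactly when the binary form $Q_C|_{\ell_P}$ is isotropic (resp.\ anisotropic), i.e.\ when $\chi(-\det(Q_C|_{\ell_P}))=1$ (resp.\ $-1$). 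Combining these yields
\[
P \text{ external to } C \iff \chi(-\det A_C)\,\chi(Q_C(v))=1,\qquad
P \text{ internal to } D \iff \chi(-\det A_D)\,\chi(Q_D(v))=-1.
\]
Set $\epsilon_C=\chi(-\det A_C)$ and $\epsilon_D=\chi(-\det A_D)$.

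Next, for $P=[v]$ off both conics the indicator of the event we are counting factors as $\tfrac14\bigl(1+\epsilon_C\chi(Q_C(v))\bigr)\bigl(1-\epsilon_D\chi(Q_D(v))\bigr)$. Summing over $S=\mathbb{P}^2(\F_q)\setminus(C\cup D)$ and expanding,
\[
4N=|S|+\epsilon_C\!\sum_{P\in S}\chi(Q_C(v))-\epsilon_D\!\sum_{P\in S}\chi(Q_D(v))-\epsilon_C\epsilon_D\!\sum_{P\in S}\chi\bigl(Q_C(v)Q_D(v)\bigr).
\]
Since $|C\cup D|\le 2(q+1)$, we have $|S|=q^2+O(q)$. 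For the two linear terms, the standard formula for the number of solutions of a nondegenerate ternary quadratic form gives $\sum_{[v]\in\mathbb{P}^2}\chi(Q_C(v))=q\,\epsilon_C$ and likewise for $D$; discarding the $O(q)$ points of $C\cup D$ changes these by $O(q)$, so both linear terms are $O(q)$. Everything therefore reduces to showing that the quartic character sum $T:=\sum_{[v]\in\mathbb{P}^2}\chi\bigl(Q_C(v)Q_D(v)\bigr)$ is $O(q^{3/2})$.

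The main obstacle is precisely this bound, and I would obtain it by passing to a double cover. In the affine cone one has $\sum_{v\in\F_q^3}\chi(Q_C(v)Q_D(v))=(q-1)\,T$, while $\#V(\F_q)=q^3+\sum_{v\in\F_q^3}\chi(Q_C(v)Q_D(v))$ for the hypersurface $V\subset\mathbb{A}^4$ defined by $y^2=Q_C(v)Q_D(v)$, using $\#\{y:y^2=a\}=1+\chi(a)$. Because $C$ and $D$ are smooth and distinct, $Q_C$ and $Q_D$ are non-associate irreducible quadratics over $\overline{\F_q}$, so $Q_CQ_D$ is not a square and $V$ is geometrically irreducible of degree $4$ and dimension $3$. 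The Lang--Weil estimate then gives $\#V(\F_q)=q^3+O(q^{5/2})$ with implied constant depending only on the degree, whence $T=O(q^{3/2})$. Combining the pieces, $4N=q^2+O(q^{3/2})$, that is $N=\tfrac{q^2}{4}+O(q^{3/2})$, as claimed; the same scheme, with the solution-count formula for forms in $n$ variables, is what I would use for the higher-dimensional statement.
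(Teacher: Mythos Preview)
Your proof is correct and shares the same global architecture as the paper's: translate the external/internal conditions into quadratic-character values via the discriminant criterion (your polar-line derivation recovers exactly the paper's Lemma~2.4 for $n=3$), expand the indicator into four character sums, and identify $T=\sum_{[v]}\chi(Q_C(v)Q_D(v))$ as the only term requiring a nontrivial bound. Where you diverge is in the tool used for that bound. The paper invokes a general estimate of Rojas-Le\'{o}n for multiplicative character sums over projective schemes (applied with $X=\mathbb{P}^{n-1}$, $H=fg$, and an auxiliary generic hyperplane $Z$), while you pass to the affine double cover $V\colon y^2=Q_C(v)Q_D(v)$, observe that it is geometrically irreducible because $Q_C,Q_D$ are non-associate geometrically irreducible quadratics, and apply Lang--Weil to get $\#V(\F_q)=q^3+O(q^{5/2})$. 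Your route is more classical and self-contained, and, as you remark, it extends verbatim to odd $n\ge 3$ since smooth quadrics in $\mathbb{P}^{n-1}$ remain geometrically irreducible; the paper's Rojas-Le\'{o}n approach buys a ready-made black box with explicit constants and avoids checking irreducibility of an auxiliary variety. For the two ``linear'' sums you use the exact solution count for a nondegenerate ternary form (yielding $\sum\chi(Q_C)=\epsilon_C q$), whereas the paper quotes Katz's theorem; both give $O(q)$, your version being the sharper and more elementary one in this dimension.
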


The prediction in the theorem is intuitive: the ``probability" that a point $P$ is internal or external should be equally likely, leading to a $\frac{1}{4}$ chance that $P$ is external to $C$ but internal to $D$. Since the number of points in $\mathbb{P}^2(\F_q)$ is $q^2+q+1$, the leading term $\frac{q^2}{4}$ in the theorem is reasonable. In the analysis, we should avoid points $P$ that lie on either of the two conics (as the definitions of internal/external do not apply to such points); however, the number of such points is at most $2(q+1)$, which is absorbed by the error term $O(q^{3/2})$. In the next paragraph, we consider the higher-dimensional version of this question.

Consider two smooth quadric hypersurfaces in $\mathbb{P}^{n-1}$, with $n\geq 3$ odd. Given such a quadric $C\subset \mathbb{P}^{n-1}$ defined over $\F_q$, the variety $C$ again satisfies the nice property that a given point $P\in\mathbb{P}^{n-1}(\F_q)$ must fall into one of three categories similar to $\eqref{enum:on-curve}$, $\eqref{enum:external}$, and $\eqref{enum:internal}$. The definitions in higher dimensions require more input from the theory of quadratic forms (or polar spaces), as explained in Section~\ref{sec:internal-external}. Given two smooth quadrics $C$ and $D$ in $\mathbb{P}^{n-1}$, we ask for the number of points in $\mathbb{P}^{n-1}(\F_q)$ external to $C$ and internal to $D$. Using the heuristic mentioned above, we again predict the number of such points to be roughly $\frac{q^{n-1}}{4}$. More generally, we can also count points which are internal / external to $C$ and internal / external to $D$. Specifically, consider the following four sets $S_{1}, S_2, S_3, S_4$:
\begin{align*}
    S_1 &= \{P\in\mathbb{P}^{n-1}(\mathbb{F}_q) \ | \ P \text{ is internal to } C \text{ and internal to } D \}, \\ 
    S_2 &= \{P\in\mathbb{P}^{n-1}(\mathbb{F}_q) \ | \ P \text{ is external to } C \text{ and external to } D \}, \\ 
    S_3 &= \{P\in\mathbb{P}^{n-1}(\mathbb{F}_q) \ | \ P \text{ is internal to } C \text{ and external to } D \}, \\ 
    S_4 &= \{P\in\mathbb{P}^{n-1}(\mathbb{F}_q) \ | \ P \text{ is external to } C \text{ and internal to } D \}. 
\end{align*}
Our main result demonstrates that each of these four sets has size roughly equal to $\frac{q^{n-1}}{4}$.

\begin{thm}\label{thm:main-quadrics}
Let $q$ be an odd prime power and $n \geq 3$ be odd. Let $C$ and $D$ be two distinct smooth quadrics in $\mathbb{P}^{n-1}$ defined over $\F_q$, respectively. 
Then 
$$|S_i|=\frac{q^{n-1}}{4}+O(q^{n-\frac{3}{2}})$$
for each $1\leq i \leq 4$, where the implicit constant in the error term depends only on $n$.
\end{thm}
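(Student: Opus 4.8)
The plan is to translate the internal/external condition into a condition on the quadratic character $\chi$ of $\F_q$, and then reduce the count to the estimation of a single quartic character sum. Fix nondegenerate quadratic forms $Q_C, Q_D$ defining $C, D$. As developed in Section~\ref{sec:internal-external}, for a point $P=[v]$ with $Q_C(v)\neq 0$, whether $P$ is external or internal to $C$ is governed by the type (hyperbolic versus elliptic) of the restriction of $Q_C$ to the polar hyperplane $v^{\perp}$; since $\disc(Q_C|_{v^{\perp}})\equiv \disc(Q_C)\,Q_C(v)$ modulo squares and $\dim v^{\perp}=n-1$ is even, this type is detected by $\chi(Q_C(v))$. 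Concretely, there is a fixed sign $\eta_C\in\{\pm 1\}$, depending only on $\disc Q_C$ and $n$, with $P$ external to $C$ iff $\chi(Q_C(v))=\eta_C$ and internal iff $\chi(Q_C(v))=-\eta_C$, and likewise a sign $\eta_D$ for $D$. Since $\chi(Q(\lambda v))=\chi(Q(v))$, these conditions are well defined on $\mathbb{P}^{n-1}$.

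Because $\chi(Q(v))=0$ exactly when $v$ lies on the quadric, for $P$ off both quadrics the product
$$\tfrac{1}{4}\bigl(1+\eta_C\chi(Q_C(v))\bigr)\bigl(1-\eta_D\chi(Q_D(v))\bigr)$$
is the indicator that $P$ is external to $C$ and internal to $D$. Expanding gives four terms: a constant, two linear terms $\chi(Q_C(v))$ and $\chi(Q_D(v))$, and the cross term $\chi(Q_C(v)Q_D(v))$. Passing from $\mathbb{P}^{n-1}(\F_q)$ to $\F_q^n\setminus\{0\}$ costs a factor $\frac{1}{q-1}$, which is legitimate as each summand is scale invariant. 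The constant term produces the main term $\frac{1}{4}\cdot\frac{q^n-1}{q-1}=\frac{q^{n-1}}{4}+O(q^{n-2})$. For the linear terms I would invoke the classical Gauss-sum evaluation (as in Lidl--Niederreiter): for nondegenerate $Q$ in an odd number $n$ of variables, $\sum_{v\in\F_q^n}\chi(Q(v))=(q-1)\,q^{(n-1)/2}\,\chi((-1)^{(n-1)/2}\disc Q)$, so after dividing by $q-1$ each contributes only $O(q^{(n-1)/2})$. Finally, replacing the genuine indicator by this product miscounts the $O(q^{n-2})$ points lying on $C\cup D$, which is absorbed into the error.

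The crux is therefore to bound the cross term $S\colonequals\sum_{v\in\F_q^n}\chi(Q_C(v)Q_D(v))$, the goal being $S=O_n(q^{n-1/2})$, which after the factor $\frac{1}{q-1}$ yields the stated $O(q^{n-3/2})$. Here I would linearize using the quadratic Gauss-sum identity $\chi(t)=g(\chi)^{-1}\sum_{s\in\F_q^*}\chi(s)\psi(st)$, valid for every $t\in\F_q$ (including $t=0$, where both sides vanish), with $\psi$ a fixed nontrivial additive character. Applying it to both factors and interchanging summation,
$$S=\frac{1}{g(\chi)^2}\sum_{s,t\in\F_q^*}\chi(st)\sum_{v\in\F_q^n}\psi\bigl((sQ_C+tQ_D)(v)\bigr),$$
and since $|g(\chi)^2|=q$, while the inner Gauss sum of the quadratic form $sQ_C+tQ_D$ has modulus $q^{(n+k)/2}$ where $k=k(s,t)$ is the corank of $sA_C+tA_D$, with $A_C,A_D$ the invertible Gram matrices.

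The remaining work is a rank analysis of the pencil. Because $C\neq D$ are both smooth, $\det(sA_C+tA_D)$ is a nonzero homogeneous form of degree $n$ in $(s,t)$, so it vanishes on at most $n$ projective directions; off these the member is nondegenerate ($k=0$) and contributes $q^{n/2}$, and there are $O(q^2)$ such pairs, giving $\frac{1}{q}\cdot O(q^2)\cdot q^{n/2}=O(q^{n/2+1})=O(q^{n-1/2})$ for $n\geq 3$. Along each of the $\leq n$ degenerate directions the corank is constant and at most $n-1$ — it cannot equal $n$, as that would force $A_C,A_D$ proportional, i.e.\ $C=D$ — so such a direction contributes $\frac{1}{q}(q-1)\,q^{(n+k)/2}\leq q^{(n+(n-1))/2}=q^{n-1/2}$. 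Summing over the boundedly many degenerate directions keeps everything $O_n(q^{n-1/2})$, exactly the bound sought. The delicate point I anticipate is controlling the corank along the degenerate directions: the crude bound $k\leq n-1$ suffices here and needs only the observation that $sA_C+tA_D$ is never the zero matrix, but it is precisely what caps the quality of the error term. Assembling the four contributions gives $N=\frac{q^{n-1}}{4}+O_n(q^{n-3/2})$.
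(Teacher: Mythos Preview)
Your argument is correct, and the overall architecture---translate external/internal into a quadratic-character condition via the discriminant of the polar hyperplane section, expand the indicator $\tfrac{1}{4}(1+\eta_C\chi(Q_C))(1-\eta_D\chi(Q_D))$ into four terms, and bound each---matches the paper exactly. The genuine divergence is in how you handle the cross term $\sum_v \chi(Q_C(v)Q_D(v))$. The paper treats $fg$ as a quartic and invokes Rojas-Le\'{o}n's general $\ell$-adic estimate for multiplicative character sums over possibly singular projective schemes (their Lemma~\ref{lem:RL}/\ref{lem1}), after first choosing an auxiliary hyperplane $Z$ so that $H\cap Z$ has controlled singular locus. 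You instead linearize via the Gauss-sum identity for $\chi$, reduce to additive Gauss sums of the pencil $sQ_C+tQ_D$, and bound these by a corank analysis: the determinant of the pencil is a nonzero binary form of degree $n$, so there are at most $n$ degenerate $\F_q$-directions, and along each the corank is at most $n-1$ because $C\neq D$ rules out proportional Gram matrices. Your route is considerably more elementary---it avoids any cohomological input and uses only classical Gauss sums plus linear algebra---and it also makes the dependence of the implied constant on $n$ completely transparent (linear in $n$ from the degenerate directions, versus the paper's exponential $3\cdot 8^{n+1}$). The paper's approach, by contrast, is more robust: Rojas-Le\'{o}n's bound would continue to apply if $f,g$ were replaced by higher-degree forms, whereas your Gauss-sum linearization is specific to quadrics. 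For the linear terms, you use the explicit closed-form evaluation of $\sum_v\chi(Q(v))$, while the paper cites Katz's theorem; both suffice, and yours is again the lighter tool.
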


Note that Theorem~\ref{thm:main-conics} is a particular case of Theorem~\ref{thm:main-quadrics}, so we will only prove Theorem~\ref{thm:main-quadrics}. 

\begin{rem} By keeping track of the explicit bounds, one can compute the constant in the error term $O(q^{n-\frac{3}{2}})$. The explicit constant is at most exponential in $n$ (see Lemma~\ref{lem1}).
\end{rem}

\textbf{Structure of the paper.} In Section~\ref{sec:internal-external}, we introduce the concepts of internal and external points for a smooth quadric. In Section~\ref{sec:character-sums}, we borrow tools from character sum estimates to develop a key technical lemma. We prove Theorem~\ref{thm:main-quadrics} in Section~\ref{sec:proof-main-result}.

\section{Internal and External Points, and Dual quadrics}\label{sec:internal-external}

In this section, we define the concept of an internal and external point to a given quadric hypersurface in $\mathbb{P}^{n-1}$ when $n$ is odd. After computing the discriminant associated with a hyperplane section of a quadric, we give an algebraic criterion for detecting internal/external points. We also explain the relevant notations, recall the classical constructions, and provide further context for our paper. Throughout the section, $q$ is an odd prime power.

\subsection{Internal and external points for higher-dimensional quadrics}\label{subsect:internal-external-higher}

The concept of internal and external points has been studied extensively for plane conics. Generalizations to higher-dimensional quadrics also appear in the literature \cite[Section 1.9]{HT16}; see also Remark~\ref{rem:past-work} for additional references. To motivate the standard definition, we recall the construction of the dual variety from algebraic geometry. Let $X = \{F=0\}$ be a smooth quadric hypersurface of dimension $n-2$ in $\mathbb{P}^{n-1}$. We denote by $(\mathbb{P}^{n-1})^{\ast}$ the parameter space of all hyperplanes in $\mathbb{P}^{n-1}$. Associated with $X$, we have a \emph{dual hypersurface},
$$
X^{\ast} = \{H \in (\mathbb{P}^{n-1})^{\ast} \ | \ H \text{ is tangent to } X \}.
$$
It turns out that $X^{\ast}$ is also a smooth quadric hypersurface. Given an $\F_q$-point $P\in \mathbb{P}^{n-1} \setminus X$, consider the hyperplane $H^{\ast}_{P}\subset (\mathbb{P}^{n-1})^{\ast}$ in the dual space parametrizing those hyperplanes in $\mathbb{P}^{n-1}$ passing through $P$. Observe that the intersection 
$$
H^{\ast}_{P} \cap X^{\ast} 
$$
is a quadric hypersurface $\{G_P = 0\}$ of dimension $n-3$ in $H^{\ast}_{P}\cong \mathbb{P}^{n-2}$. Let $Y_P$ denote the polar space $H^{\ast}_{P}(\F_q)$ equipped with the quadratic form $G_P$. 

\begin{defn}\label{def:internal-external} Let $X$ be a smooth quadric in $\mathbb{P}^{n-1}$ defined over $\F_q$, where $n$ is odd. Let $P \in \mathbb{P}^{n-1}(\F_q)$ be a point not on $X$. We say
\begin{enumerate}[(i)]
\item $P$ is an \emph{internal point} to $X$ if $Y_P$ is elliptic; and
\item $P$ is an \emph{external point} to $X$ if $Y_P$ is hyperbolic.
\end{enumerate}
\end{defn}

The words elliptic and hyperbolic appear in finite geometry, namely in the theory of polar spaces \cite[Section 4.2]{B15}, or quadratic spaces over finite fields \cite[Section 2]{Y24}. More precisely, an $(m-1)$-dimensional projective space over $\F_q$ equipped with a non-degenerate quadratic form in $m$ variables is isometrically isomorphic to a space equipped with one of the following quadratic forms (see \cite[Theorem 3.28]{B15} or \cite[equation (1.2)]{Y24}):
\begin{enumerate}
\item (hyperbolic) $x_1 x_2+...+x_{2k-1} x_{2k}$, where $m=2k$;
\item (elliptic) $x_1 x_2+...+x_{2k-3} x_{2k-2}+(x_{2k-1}^2-\lambda x_{2k}^2)$ for some non-square $\lambda\in\F_q$, where $m=2k$;
\item (parabolic) $x_1 x_2+...+x_{2k-1} x_{2k}+c x_{2k+1}^2$ for some $c\in\F_q^*$, where $m=2k+1$.
\end{enumerate}

The definition of internal and external points in the planar case $n=3$, as given in the introduction, is consistent with Definition~\ref{def:internal-external}. Indeed, the hyperbolic binary form $x_1x_2$ has two zeros $[0:1]$ and $[1:0]$ in $\mathbb{P}^{1}(\F_q)$, corresponding to the two tangent lines to the conic passing through $P$. The elliptic binary form $x_1^2-\lambda x_2^2$ for a non-square $\lambda\in\F_q$ has no zeros in $\mathbb{P}^{1}(\F_q)$, indicating the absence of tangent lines passing through $P$.

As we saw previously, the presence or absence of solutions dictates whether the point is external or internal. To quantify the number of solutions, we use the following concept. The \emph{rank} of a polar space is the dimension of a maximum totally isotropic subspace. For a polar space equipped with a non-degenerate quadratic form with $2k$ variables, the rank is $k$ in the hyperbolic case and $k-1$ in the elliptic case. When the number of variables is $2k + 1$, the only possibility is the parabolic case, where the rank is $k$ \cite[Table 4.1]{B15}. Since hyperbolic spaces have larger rank than elliptic ones, the quadratic form in the hyperbolic case has more zeros over $\F_q$ compared to the elliptic case (by applying induction on the dimension of the space; see, for example, \cite{P51}), justifying Definition~\ref{def:internal-external}. 

When $n$ is even, the polar space $Y_P$ is equipped with a quadratic form with $m=n-1$ variables. Thus, $Y_P$ is parabolic for every $P\notin X$, and it is unclear how to classify the $\F_q$-points of $\mathbb{P}^{n-1}\setminus X$ into the two categories such as internal and external. Indeed, all quadratic forms associated with a parabolic polar space have the same number of $\F_q$-points.

When $n$ is odd, to distinguish the two cases in Definition~\ref{def:internal-external}, it suffices to study the \emph{discriminant} of the quadratic form $G_P$. The discriminant of a non-degenerate quadratic form $\sum_{1\leq i, j\leq m} a_{ij} x_i x_j$ in $m$ variables is defined by the determinant of the matrix $(a_{ij})$ assuming that $a_{ji}=a_{ij}$ for $i\neq j$. For the associated quadratic hypersurface, the discriminant is well-defined only inside the quotient $\F^{\ast}_{q}/(\F^{\ast}_q)^2$, where $m$ is necessarily even; thus, we view the discriminant as an element in $\F^{\ast}_{q}/(\F^{\ast}_q)^2$. Lemma~\ref{lem:discriminant} allows us to detect whether the form is hyperbolic or elliptic based on the value of the discriminant. The next subsection delves further into the specific properties of the discriminant. 

\subsection{Discriminant of the hyperplane section of a quadric}

We will prove an explicit formula for the discriminant of the quadratic form $G_P$ defined in Section~\ref{subsect:internal-external-higher}.

\begin{lem}\label{lem:G_p}
The discriminant of $G_p$ is $F(P)/\disc(F)$.
\end{lem}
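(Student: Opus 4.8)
The plan is to pass to linear algebra by representing everything through symmetric matrices, and then to reduce the computation of $\disc(G_P)$ to a single determinantal identity for the restriction of a quadratic form to a hyperplane. Write $F(x)=x^{T}Mx$ for the symmetric Gram matrix $M$ of $F$, so that $\disc(F)\equiv\det(M)$ in $\F_q^*/(\F_q^*)^2$ (the precise normalizing constant coming from the chosen convention is a fixed square and plays no role). First I would recall the classical description of the dual quadric. Since the tangent hyperplane to $X=\{F=0\}$ at a smooth point $p$ has coefficient vector $\nabla F(p)=2Mp$, a hyperplane with coordinate vector $\xi$ is tangent to $X$ exactly when $\xi=Mp$ for some $p$ with $p^{T}Mp=0$; eliminating $p=M^{-1}\xi$ gives $X^{\ast}=\{\,\xi^{T}M^{-1}\xi=0\,\}$, so the dual form $F^{\ast}$ has Gram matrix $M^{-1}$.

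Next I would make the hyperplane section explicit. A hyperplane of $\mathbb{P}^{n-1}$ with coordinate vector $\xi$ passes through $P=[p]$ precisely when $p^{T}\xi=0$, so in the dual space $H^{\ast}_{P}=\{[\xi]:p^{T}\xi=0\}$, and therefore $G_P$ is the restriction of the quadratic form $\xi\mapsto\xi^{T}M^{-1}\xi$ to the hyperplane $\{p^{T}\xi=0\}$. The whole lemma then follows from the following fact, which I would isolate and prove: if $Q(\xi)=\xi^{T}N\xi$ is nondegenerate with $N$ symmetric and invertible, and $\ell\neq 0$, then the restriction of $Q$ to the hyperplane $\{\ell^{T}\xi=0\}$ has discriminant $\det(N)\,(\ell^{T}N^{-1}\ell)$ modulo squares. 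To prove this I would choose a basis in which $\ell=e_n$; then $Q|_{\{\xi_n=0\}}$ has Gram matrix equal to the leading $(n-1)\times(n-1)$ block $N'$ of $N$, so its discriminant is $\det(N')$, while $\ell^{T}N^{-1}\ell=(N^{-1})_{nn}=\det(N')/\det(N)$ by the cofactor formula; multiplying gives $\det(N)(\ell^{T}N^{-1}\ell)=\det(N')$ exactly. Both sides scale by $\det(S)^2$ under a change of basis $\xi=S\eta$ (note $\ell^{T}N^{-1}\ell$ is itself invariant), so the identity persists in every basis modulo squares.

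Applying this with $N=M^{-1}$ and $\ell=p$ yields $\disc(G_P)\equiv\det(M^{-1})\,(p^{T}Mp)=F(P)/\det(M)\equiv F(P)/\disc(F)$ in $\F_q^*/(\F_q^*)^2$, which is the claim. The step that requires the most care is not any single computation but the bookkeeping of square classes: since $P\notin X$ we have $F(P)\in\F_q^*$, so the right-hand side is a well-defined square class, and although $F$ has an odd number $n$ of variables—so that $\disc(F)$ and $F(P)$ individually depend on the scaling of $F$ and on the representative $p$—the ratio $F(P)/\disc(F)$ is unchanged modulo squares under $F\mapsto cF$ (it scales by $c^{1-n}$, an even power since $n$ is odd) and under $p\mapsto\lambda p$ (it scales by $\lambda^2$). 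I would check these invariances explicitly so that the equality of Lemma~\ref{lem:G_p} is a genuine identity of square classes, matching the fact that $G_P$ is a form in the even number $n-1$ of variables.
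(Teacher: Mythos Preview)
Your argument is correct and follows the same overall outline as the paper: identify the Gram matrix of the dual quadric $X^\ast$ as $M^{-1}$, describe $H_P^\ast$ as the hyperplane $\{p^T\xi=0\}$ in the dual space, and then compute the discriminant of the restriction of $\xi^T M^{-1}\xi$ to that hyperplane. The difference lies in how the last step is carried out. The paper first diagonalizes $F$, writes down an explicit $n\times(n-1)$ matrix $B$ parametrizing $H_P^\ast$, and evaluates $\det(B^tA^{-1}B)$ via the Cauchy--Binet formula. You instead prove the general identity $\disc\bigl(Q|_{\ell^\perp}\bigr)\equiv\det(N)\,(\ell^TN^{-1}\ell)$ by passing to a basis with $\ell=e_n$ and invoking the cofactor expression $(N^{-1})_{nn}=\det(N')/\det(N)$. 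Your route is shorter and coordinate-free, and it isolates a reusable lemma; the paper's computation is more hands-on but yields an explicit scalar formula, not just a square class. Your closing remarks on well-definedness under $F\mapsto cF$ and $p\mapsto\lambda p$ are a nice addition that the paper leaves implicit.
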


Since $X=\{F=0\}$ is smooth, $F$ is a non-degenerate quadratic form. After an invertible linear change of variables, we may assume that $F(\mathbf{x})=\alpha_0 x_0^2+\alpha_1x_1^2+\cdots+\alpha_{n-1}x_{n-1}^2$ \cite[equation (1.1)]{Y24} with $\alpha_i \neq 0$ for each $0 \leq i \leq n-1$. Note that the discriminant of a quadratic form is invariant under such a change of variables. Let $A$ be the diagonal matrix $(\alpha_i)$; then we can write $F(\mathbf{x})=\mathbf{x}A\mathbf{x}^t$ where $\mathbf{x}$ is a row vector.

We describe the points of the dual hypersurface of $X$. 
\begin{align*}
X^*
&=\bigg\{\nabla F (Q)=\bigg[\frac{\partial F}{\partial x_0}(Q): \frac{\partial F}{\partial x_1}(Q): \cdots: \frac{\partial F}{\partial x_{n-1}}(Q)\bigg]: Q \in X\bigg\}\\
&=\{(A[q_0, q_1, \ldots, q_{n-1}]^t)^t: Q=[q_0:q_1:\cdots:q_{n-1}] \in X\}\\ 
&=\{[q_0, q_1, \ldots, q_{n-1}] A: Q=[q_0:q_1:\cdots:q_{n-1}] \in X\}\\ 
&=\{\mathbf{y} \in \mathbb{P}^{n-1}: F(\mathbf{y}A^{-1})=0\}=\{\mathbf{y} \in \mathbb{P}^{n-1}: \mathbf{y}A^{-1}\mathbf{y}^t=0\}.
\end{align*}

Let $P=[p_0:p_1:\cdots:p_{n-1}] \in \mathbb{P}^{n-1}$. Without loss of generality, assume that $p_0=1$.
$$
H_P^*=\bigg\{\mathbf{y}=[y_0:y_1:\cdots:y_{n-1}] \in \mathbb{P}^{n-1}: \sum_{i=0}^{n-1} y_ip_i=0\bigg\}.
$$
Let $\mathbf{y}=[y_0:\mathbf{z}]$ with $\mathbf{z}=[y_1:y_2:\ldots: y_{n-1}]$. If $\mathbf{y} \in H_{P}^*$, then we have $\mathbf{y}^t=B\mathbf{z}^t$, where
$$
B=
\begin{pmatrix} 
-p_1&-p_2&\cdots &-p_{n-1}\\
1&0&\cdots&0\\
0&1&\cdots&0\\
\vdots&\vdots&\ddots&\vdots\\
0&0&\cdots&1
\end{pmatrix}
$$
is an $n\times (n-1)$ matrix. It follows that 
\begin{align*}
H_P^* \cap X^*
&=\{\mathbf{y}=[y_0:\mathbf{z}] \in \mathbb{P}^{n-1}: \mathbf{y}A^{-1}\mathbf{y}^t=0, \mathbf{y}^t=B\mathbf{z}^t\}\\
&\cong \{\mathbf{z} \in \mathbb{P}^{n-2}: \mathbf{z}B^tA^{-1}B\mathbf{z}^t=0\},
\end{align*}
where we identify $H_{P}^{\ast}$ as $\mathbb{P}^{n-2}$ with homogeneous coordinates given by $\mathbf{z}$. Thus, $H_P^{\ast}\cap X^{\ast}$ is a quadratic hypersurface defined by the following quadratic form:
$$
G_p(\mathbf{z})=\mathbf{z}B^tA^{-1}B\mathbf{z}^t.
$$

Now, Lemma~\ref{lem:G_p} immediately follows from the lemma below by setting $a_i=\alpha_{i}^{-1}$ for each $0\leq i\leq n-1$.
\begin{lem}
Let $a_0, a_1, \ldots, a_{n-1} \in \F_q^*$. Let $c_0=1$, and let $c_1,c_2,\ldots, c_{n-1} \in \F_q$.  Let $A_n$ be the diagonal matrix with diagonal entry $(a_i^{-1})_{i=0}^{n-1}$, 
$$
B_n=\begin{pmatrix} \mathbf{c} \\ I_n \end{pmatrix} = 
\begin{pmatrix} 
c_1&c_2&\cdots &c_{n-1}\\
1&0&\cdots&0\\
0&1&\cdots&0\\
\vdots&\vdots&\ddots&\vdots\\
0&0&\cdots&1
\end{pmatrix},
$$
and $E_n=B_n^{t} A_n^{-1}B_n$. Then $$
\det(E_n)=a_0a_1\cdots a_{n-1} \cdot \sum_{i=0}^{n-1} \frac{c_i^2}{a_i}.
$$
\end{lem}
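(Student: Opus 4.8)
The plan is to compute $E_n$ explicitly, recognize it as a rank-one perturbation of an invertible diagonal matrix, and then apply the matrix determinant lemma. First I would record that since $A_n = \operatorname{diag}(a_0^{-1}, a_1^{-1}, \ldots, a_n^{-1})$, its inverse is $A_n^{-1} = \operatorname{diag}(a_0, a_1, \ldots, a_n)$ — this is the one bookkeeping point that must be handled carefully. Writing $B_n$ in block form with top row $\mathbf{c} = (c_1, \ldots, c_n)$ and lower block $I_n$, a direct computation of the $(j,k)$-entry of $E_n = B_n^t A_n^{-1} B_n$ for $j,k \in \{1, \ldots, n\}$ gives
$$
(E_n)_{jk} = a_0 c_j c_k + a_j \delta_{jk}.
$$
Indeed, row $0$ of $B_n$ contributes $c_j \cdot a_0 \cdot c_k$, while each row $i \geq 1$ contributes $a_i$ only in the case $i = j = k$. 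Hence $E_n = D + a_0\, \mathbf{c}^t \mathbf{c}$, where $D = \operatorname{diag}(a_1, \ldots, a_n)$ and $\mathbf{c}^t \mathbf{c}$ is the rank-one $n \times n$ matrix with $(j,k)$-entry $c_j c_k$.

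Next I would invoke the matrix determinant lemma $\det(D + \mathbf{u}\mathbf{v}^t) = \det(D)\,(1 + \mathbf{v}^t D^{-1} \mathbf{u})$, valid since $D$ is invertible (all $a_i \neq 0$), taking $\mathbf{u} = a_0\, \mathbf{c}^t$ and $\mathbf{v} = \mathbf{c}^t$ so that $\mathbf{u}\mathbf{v}^t = a_0\, \mathbf{c}^t \mathbf{c}$. This yields
$$
\det(E_n) = \det(D)\left(1 + a_0\, \mathbf{c}\, D^{-1} \mathbf{c}^t\right) = a_1 \cdots a_n \left(1 + a_0 \sum_{i=1}^n \frac{c_i^2}{a_i}\right).
$$

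Finally I would simplify using the hypothesis $c_0 = 1$. Since $a_1 \cdots a_n = a_0 a_1 \cdots a_n \cdot \tfrac{1}{a_0} = a_0 a_1 \cdots a_n \cdot \tfrac{c_0^2}{a_0}$, the stray $1$ in the parenthesis is absorbed exactly into the $i=0$ summand, and distributing $a_0 a_1 \cdots a_n$ through the remaining terms gives
$$
\det(E_n) = a_0 a_1 \cdots a_n \left(\frac{c_0^2}{a_0} + \sum_{i=1}^n \frac{c_i^2}{a_i}\right) = a_0 a_1 \cdots a_n \sum_{i=0}^n \frac{c_i^2}{a_i},
$$
as claimed. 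I do not expect any substantial obstacle here: the content is entirely the identification of the rank-one structure, after which the determinant computation is mechanical. If one prefers to avoid quoting the matrix determinant lemma, the same identity follows by induction on $n$ via cofactor expansion, or by factoring $\det(D + a_0\,\mathbf{c}^t\mathbf{c}) = \det(D)\det(I + a_0 D^{-1}\mathbf{c}^t\mathbf{c})$ together with the Weinstein–Aronszajn identity $\det(I + \mathbf{u}\mathbf{v}^t) = 1 + \mathbf{v}^t\mathbf{u}$.
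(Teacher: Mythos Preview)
Your proof is correct, and it takes a genuinely different route from the paper. The paper keeps $E_n$ factored as a product $M_n B_n$ of an $n\times(n+1)$ matrix with an $(n+1)\times n$ matrix and applies the Cauchy--Binet formula, obtaining $n+1$ summands, one for each deleted column/row pair, each of which evaluates to $(a_0\cdots a_n)\,c_i^2/a_i$. You instead multiply $B_n^t A_n^{-1} B_n$ out explicitly, observe that the result is a diagonal matrix plus a rank-one correction $D + a_0\,\mathbf{c}^t\mathbf{c}$, and apply the matrix determinant lemma. Your approach is arguably the more economical of the two: once the rank-one structure is spotted, the computation is a single line, whereas the Cauchy--Binet expansion requires checking each minor. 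On the other hand, the Cauchy--Binet argument is more mechanical in that it requires no structural insight beyond recognizing the rectangular-product shape, and it would adapt more readily if $A_n^{-1}$ were replaced by a non-diagonal matrix.
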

\begin{proof}
Note that
$$
M_n=B_n^t A_n^{-1} = \begin{pmatrix}
a_0 c_1 & a_1 & 0 & \cdots  & 0 \\ 
a_0 c_2 & 0 & a_2 & \cdots  & 0 \\ 
\vdots & \vdots & \vdots &\ddots & \vdots \\  
a_0 c_{n-1} & 0 & 0 & \cdots  & a_{n-1} \\ 
\end{pmatrix}.
$$
We apply the Cauchy-Binet formula to calculate the determinant of the matrix $E_n = M_n B_n$. Since $M_{n}$ is an $(n-1)\times n$ matrix and $B_n$ is an $n\times (n-1)$ matrix, we have $\binom{n}{n-1}=n$ summands in the Cauchy-Binet formula. More precisely,
$$
\det(E_n) = \sum_{i=0}^{n-1} \det(M^{(i)}_{n}) \cdot \det(B^{(i)}_{n}),
$$
where $M^{(i)}_{n}$ and $B^{(i)}_{n}$ are the matrices obtained from $M_n$ and $B_n$ by removing the $(i+1)$-th column and $(i+1)$-th row, respectively. For $i=0$, we have $M^{(0)}_{n}=\operatorname{diag}(a_1, ..., a_{n-1})$ and $B^{(0)}_{n} = I_n$, so
$$
\det(M^{(0)}_{n}) \cdot \det(B^{(0)}_{n}) = a_1a_2\cdots a_{n-1} = (a_0 a_1 ... a_{n-1}) \cdot \frac{c_0^2}{a_0} 
$$
as $c_0=1$. When $i=1$, we have
$$
M^{(1)}_{n} =  
\begin{pmatrix}
a_0c_1 & 0 & \cdots  & 0 \\ 
a_0c_2 & a_2 & \cdots  & 0 \\ 
\vdots & \vdots &\ddots & \vdots \\  
a_0c_{n-1} & 0 & \cdots  & a_{n-1} \\ 
\end{pmatrix} \quad \text{ and } \quad 
B^{(1)}_{n} = 
\begin{pmatrix}
c_1 & c_2 & \cdots  & c_{n-1} \\ 
0 & 1 & \cdots  & 0 \\ 
\vdots & \vdots &\ddots & \vdots \\  
0 & 0 & \cdots  & 1 \\ 
\end{pmatrix}.
$$
Hence, $\det(M^{(1)}_n)\det(B^{(1)}_n) = (a_0 a_2\cdots a_{n-1} c_1)\cdot (c_1) = (a_0 a_1\cdots a_{n-1})\cdot \frac{c_1^2}{a_1}$. Similar computation for $i>1$ shows that $\det(M^{(i)}_n) \det(B^{(i)}_n) = (a_0a_1\cdots a_{n-1})\frac{c^2}{a_i}$. Thus, 
$$
\det(E_n) = \sum_{i=0}^{n-1} \det(M^{(i)}_{n}) \cdot \det(B^{(i)}_{n}) = a_0a_1\cdots a_{n-1} \cdot \sum_{i=0}^{n-1} \frac{c_i^2}{a_i}
$$
as desired. \end{proof}

\subsection{An algebraic criterion for internal/external points}

The following lemma is folklore; see \cite[Proof of Proposition 6.1]{BIK23}. In particular, Lemma~\ref{lem:discriminant} recovers the classical algebraic criterion for the case $n=3$; this special case for irreducible conics appears in both \cite[Theorem 8.3.3]{H79} and \cite[Result 1]{S92}. We include a proof for the sake of completeness. 

\begin{lem}\label{lem:discriminant}
Let $n$ be odd. Let $X=\{F=0\}$ be a smooth quadric over $\F_q$ given by the equation $F(x_1, \ldots, x_{n}) = \sum_{1\leq i, j \leq n} a_{ij} x_i x_j =0$ such that $a_{ij}=a_{ji}$ for $i\neq j$. Let $\Delta=\det(a_{ij})$. Given a point $P=[p_1: \ldots: p_{n}] \in \mathbb{P}^{n-1}(\F_q)$, we have:
\begin{enumerate}[(a)]
    \item $P\in X$ if and only if $F(P) = 0$.
    \item $P$ is external to $X$ if and only if $(-1)^{(n-1)/2}\Delta\cdot F(P)$ is a non-zero square in $\F_q$.
    \item $P$ is internal to $X$ if and only if $(-1)^{(n-1)/2}\Delta\cdot F(P)$ is a non-square in $\F_q$.
\end{enumerate}    
\end{lem}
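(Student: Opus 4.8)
The plan is to reduce the statement to a single discriminant criterion distinguishing hyperbolic from elliptic forms in an even number of variables, and then feed in Lemma~\ref{lem:G_p}. Part (a) is immediate from the definition of $X$. For a point $P\notin X$ we have $F(P)\neq 0$, and since $X$ is smooth we also have $\Delta=\disc(F)\neq 0$; thus Lemma~\ref{lem:G_p} gives $\disc(G_P)=F(P)/\Delta\neq 0$, so $G_P$ is a non-degenerate quadratic form in $n-1$ variables. As $n$ is odd, $n-1=2k$ is even with $k=(n-1)/2$, so by the classification recalled in Section~\ref{subsect:internal-external-higher} the polar space $Y_P$ (which carries the form $G_P$) is either hyperbolic or elliptic, and by Definition~\ref{def:internal-external} it only remains to decide which case occurs in terms of $\disc(G_P)$.

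The key step is the claim that a non-degenerate quadratic form $G$ in $2k$ variables over $\F_q$ is hyperbolic if and only if $(-1)^k\disc(G)$ is a nonzero square, and elliptic if and only if $(-1)^k\disc(G)$ is a non-square. I would prove this by evaluating the discriminant directly on the two normal forms listed in Section~\ref{subsect:internal-external-higher}. For the hyperbolic form $x_1x_2+\cdots+x_{2k-1}x_{2k}$, the associated matrix is block diagonal with $k$ blocks $\left(\begin{smallmatrix} 0 & 1 \\ 1 & 0 \end{smallmatrix}\right)$, each of determinant $-1$, so $\disc=(-1)^k$ and hence $(-1)^k\disc=1$ is a square. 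For the elliptic form, the first $k-1$ hyperbolic blocks contribute $(-1)^{k-1}$ and the final block $x_{2k-1}^2-\lambda x_{2k}^2=\operatorname{diag}(1,-\lambda)$ contributes $-\lambda$, so $\disc\equiv(-1)^k\lambda$ and hence $(-1)^k\disc\equiv\lambda$ is a non-square. Because the discriminant is well-defined in $\F_q^*/(\F_q^*)^2$ (a change of basis multiplies it by a square factor), these two computations settle the hyperbolic/elliptic dichotomy for every non-degenerate $G$ in $2k$ variables.

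Finally I would combine the two ingredients. By Lemma~\ref{lem:G_p}, $\disc(G_P)=F(P)/\Delta\equiv F(P)\cdot\Delta\pmod{(\F_q^*)^2}$, and therefore
$$(-1)^{(n-1)/2}\disc(G_P)\equiv (-1)^{(n-1)/2}\,\Delta\,F(P)\pmod{(\F_q^*)^2}.$$
Applying the claim with $k=(n-1)/2$ then yields: $P$ is external $\iff$ $Y_P$ is hyperbolic $\iff$ $(-1)^{(n-1)/2}\Delta F(P)$ is a nonzero square, which is statement (b); and $P$ is internal $\iff$ $Y_P$ is elliptic $\iff$ $(-1)^{(n-1)/2}\Delta F(P)$ is a non-square, which is statement (c).

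I expect the main obstacle to be purely the sign bookkeeping in the discriminant claim, namely correctly tracking the factors of $-1$ from the hyperbolic blocks together with the prefactor $(-1)^{(n-1)/2}$, rather than anything conceptually hard: the two normal-form computations are short, the invariance mod squares is standard, and the reduction through Lemma~\ref{lem:G_p} is immediate.
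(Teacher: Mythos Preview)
Your proposal is correct and follows essentially the same route as the paper: both compute $\disc(G_P)$ via Lemma~\ref{lem:G_p}, evaluate the discriminant on the hyperbolic and elliptic normal forms in $2k$ variables to get $(-1)^k$ versus $(-1)^k\lambda$, and then combine. Your version is slightly more explicit about the block-diagonal determinant computation, but the argument is the same.
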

\begin{proof}
Let $n=2k+1$. By Definition~\ref{def:internal-external}, to check if $Y_P$ is hyperbolic or elliptic, it suffices to compute the discriminant of the associated quadratic form $G_P$. Note that if $Y_P$ is hyperbolic, then $\disc(G_P)=(-1)^k$; if $Y_P$ is elliptic, then $\disc(G_P)=(-1)^{k-1} (-\lambda)=(-1)^k \lambda$, where $\lambda$ is a non-square in $\F_q$. Note that $(-1)^k \neq (-1)^k \lambda$ are different elements in the quotient group $\F^{\ast}_{q}/(\F^{\ast}_q)^2$. 

Since $X$ is smooth, $\Delta=\disc(F) \neq 0$. We have shown that $\disc(G_p)=F(P)/\Delta=\Delta \cdot F(P)$ in $\F^{\ast}_{q}/(\F^{\ast}_q)^2$ from Lemma~\ref{lem:G_p}. Thus, $P$ is external to $X$ if and only if $\disc(G_p)=(-1)^k$, that is, $(-1)^{(n-1)/2}\Delta\cdot F(P)$ is a non-zero square in $\F_q$.
\end{proof}

\begin{rem}\label{rem:past-work}    
Carlitz \cite{C66} and Jung \cite{J72} defined \emph{exterior} and \emph{interior} points of a quadric $X=\{F=0\}$ over $\mathbb{P}^{n-1}$ in a different way. If $P=[p_1: \cdots :p_n] \in \mathbb{P}^{n-1}(\F_q)$, then $P$ is exterior to $X$ if $F(p_1, \ldots, p_n)$ is a non-zero square in $\F_q$, and interior to $X$ if $F(p_1, \ldots, p_n)$ is a non-square in $\F_q$.  However, as pointed out by Primrose \cite{P72} in the MathSciNet review of \cite{J72}, this terminology lacks geometric meaning since it depends on the equation of $F$ (because an interior point for $F$ becomes an exterior point for $cF$ if $c$ is a non-square). On the other hand, Bruno \cite[Section 9]{B68} used the statement of Lemma~\ref{lem:discriminant} as the definition of exterior and interior points but did not offer a geometric interpretation. 
\end{rem}

\section{Proof of the main result}

\subsection{Technical lemmas from character sum estimates}\label{sec:character-sums}

We begin with two technical lemmas involving character sum estimates. The first one is a simplified version of a well-known result due to Katz \cite[Theorem 2.2]{K02} on nonsingular multiplicative character sum estimates.

\begin{lem}[Katz]\label{lem:Katz}
Let $q$ be a prime power and $\chi$ be a nontrivial multiplicative character of $\F_q$. Let $f\in \F_q[x_1,x_2,\ldots, x_n]$ be a homogeneous polynomial with degree $d\geq 1$, such that the equation $f=0$ defines a smooth hypersurface in $\mathbb{P}^{n-1}$. If $\chi^d$ is trivial, then 
$$
\left|\sum_{\mathbf{x}\in \F_q^n} \chi(f(\mathbf{x}))\right| \leq (d-1)^n q^{n/2}.
$$    
\end{lem}

We also need the following general result on estimating multiplicative character sums over possibly singular spaces due to Rojas-Le\'{o}n \cite[Theorem 1.1(a)]{RL05}. We state a simplified version of Rojas-Le\'{o}n's theorem. 

\begin{lem}[Rojas-Le\'{o}n]\label{lem:RL}
 Let $X$ be a projective Cohen-Macaulay scheme defined over $\F_q$ with pure dimension $m \geq 2$, embedded in $\mathbb{P}^N$ as the closed subscheme defined by $r$ homogeneous forms $F_1, \ldots, F_r$ of degrees $a_1, \ldots, a_r$. Let $H$ and $Z$ be homogeneous forms in $\F_q\left[X_0, \ldots, X_N\right]$ of degrees $d$ and $e$, where $\gcd(d, e)=1$, $\gcd(e, p)=1$ with $p$ the characteristic of $\F_q$. We will also denote by $H$ and $Z$ the hypersurfaces they define in $\mathbb{P}^N$. Assume that $X \cap H \cap Z$ has pure codimension 2 in $X$, and denote by $\delta$ the dimension of the singular locus of $X \cap H \cap Z$. Let $\chi$ be a nontrivial multiplicative character of $\F_q$, such that $\chi^e$ is nontrivial. Let $V=X-(H \cup Z)$ and $f\colon V \rightarrow \F_q^*$ be the map defined by $f(\mathbf{x})=H(\mathbf{x})^e / Z(\mathbf{x})^d$. The following estimate holds:
$$
\bigg|\sum_{\mathbf{x} \in V(\F_q)} \chi(f(\mathbf{x}))\bigg| \leq 3(3+\max \left(a_1, \ldots, a_r, e\right)+d)^{N+r+2} \cdot q^{(m+\delta+2) / 2} .
$$   
\end{lem}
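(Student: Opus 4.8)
The statement is a specialization of Rojas-Le\'on's general bound for multiplicative character sums, so the plan is to derive it directly from \cite[Theorem 1.1(a)]{RL05} rather than to reprove it from scratch. The main work is then to match the hypotheses and simplify the output: I would check that the Cohen--Macaulay and pure-dimension assumptions on $X$, the coprimality conditions $\gcd(d,e)=\gcd(e,p)=1$, the purity of the codimension of $X\cap H\cap Z$, and the nontriviality of $\chi^e$ are precisely the data consumed by the source theorem, and that the quantity $\delta$ here is exactly the dimension of the singular locus appearing there. The displayed inequality is obtained by collapsing the various degree- and dimension-dependent factors of the original constant into the single expression $3(3+\max(a_1,\ldots,a_r,e)+d)^{N+r+2}$; this is routine (if tedious) bookkeeping.

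For completeness, let me recall the mechanism that I would reproduce if a self-contained argument were demanded. To the nontrivial character $\chi$ one associates the Kummer sheaf $\mathcal{L}_\chi$ on $\mathbb{G}_m$; pulling back along the morphism $f\colon V\to\mathbb{G}_m$, $f(\mathbf{x})=H(\mathbf{x})^e/Z(\mathbf{x})^d$, yields a rank-one lisse $\ell$-adic sheaf $\mathcal{F}=f^{\ast}\mathcal{L}_\chi$ on $V$, pure of weight $0$. The Grothendieck--Lefschetz trace formula expresses the sum as
$$
\sum_{\mathbf{x}\in V(\F_q)}\chi(f(\mathbf{x}))=\sum_{i=0}^{2m}(-1)^i\,\Tr\!\big(\mathrm{Frob}_q\mid H^i_c(V_{\overline{\F}_q},\mathcal{F})\big),
$$
and Deligne's Riemann Hypothesis (Weil II) bounds the Frobenius eigenvalues on $H^i_c$ by $q^{i/2}$. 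Whereas the trivial estimate is the point count $q^m$, the improvement to the exponent $(m+\delta+2)/2$ comes from showing that the cohomology in the top degrees either vanishes or is concentrated along the lower-dimensional singular locus, so that only a contribution governed by $\delta$ survives at the highest weights.

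The genuinely delicate point — the one I expect to be the main obstacle in a from-scratch proof — is twofold. First, one must control the weights of the near-top cohomology using the geometry of the boundary divisor $H\cup Z$ together with the hypothesis that $\chi^e$ is nontrivial; this nontriviality is precisely what prevents $\mathcal{F}$ from being geometrically trivial along the ramification divisors, so that $H^{2m}_c$ and $H^0_c$ contribute no main term and one obtains genuine square-root-type cancellation up to the singular locus. Second, one needs a uniform bound on the total Betti number $\sum_i\dim H^i_c(V_{\overline{\F}_q},\mathcal{F})$ in terms of the degrees $a_1,\ldots,a_r,d,e$ and the ambient dimension $N$ alone; this rests on effective bounds for sums of Betti numbers of varieties cut out by equations of bounded degree (in the spirit of Bombieri, Adolphson--Sperber, and Katz), and it is this combinatorial estimate that is responsible for the explicit constant $3(3+\max(a_1,\ldots,a_r,e)+d)^{N+r+2}$.
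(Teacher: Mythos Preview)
Your approach is exactly what the paper does: the lemma is simply quoted from \cite[Theorem~1.1(a)]{RL05} without proof, so your plan to invoke that source and match hypotheses is correct and coincides with the paper's treatment. Your additional sketch of the $\ell$-adic cohomology mechanism and Betti-number bookkeeping goes beyond what the paper provides, but is accurate as background.
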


In the rest of the paper, let $q$ be an odd prime power and let $\chi$ be the quadratic character of $\F_q$, that is, 
$$
\chi(a)=\begin{cases}
1  &\text{if } a \text{ is a nonzero square in $\F_q$,} \\ 
-1 &\text{if } a \text{ is a non-square in $\F_q$,} \\ 
0 &\text{if } a=0,
\end{cases}
$$
for each $a\in\F_q$.

\begin{lem}\label{lem1}
Let $q \geq 7$ be an odd prime power and $n \geq 3$ odd. Assume that $C:f=0$ and $D:g=0$ are two distinct smooth quadrics in $\mathbb{P}^{n-1}$, defined over $\F_q$. Then
$$
\left|\sum_{P\in\mathbb{P}^{n-1}(\F_q)} \chi(f(P) g(P))\right| \leq 3\cdot 8^{n+1} q^{(2n-3)/2} +2q^{n-2}. 
$$ 
\end{lem}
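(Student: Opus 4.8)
The plan is to estimate the character sum $\sum_{P\in\mathbb{P}^{n-1}(\F_q)}\chi(f(P)g(P))$ by realizing it as an affine sum and applying Lemma~\ref{lem:RL} (Rojas-Le\'{o}n) with a careful choice of auxiliary data. The immediate difficulty is that the summand $\chi(f(P)g(P))$ is a character of a \emph{single} homogeneous form $fg$ of degree $4$, whereas Lemma~\ref{lem:RL} is designed for a ratio $H^e/Z^d$ with $\gcd(d,e)=1$. To fit the hypotheses, I would take $H = fg$ (degree $d=4$) and introduce an auxiliary linear form $Z = \ell$ (degree $e=1$) so that $\gcd(d,e)=\gcd(4,1)=1$ and $\gcd(e,p)=1$ automatically. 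With $\chi$ a character of order $2$ (so $\chi^e=\chi$ is nontrivial), the function $f(\mathbf{x})=H(\mathbf{x})^e/Z(\mathbf{x})^d = (fg)/\ell^4$ satisfies $\chi(f(\mathbf{x})) = \chi(fg)\cdot\chi(\ell^4) = \chi(fg)$ since $\ell^4$ is a perfect square. Thus on the open set $V = \mathbb{P}^{n-1}\setminus(\{fg=0\}\cup\{\ell=0\})$ the character sum in Lemma~\ref{lem1} agrees with the Rojas-Le\'{o}n sum, and I would take $X=\mathbb{P}^{n-1}$ itself (so $r=0$, or $r=1$ with a trivial defining form, $N=n-1$, pure dimension $m=n-1$), which is smooth and Cohen-Macaulay.

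Next I would account for the points omitted from $V$. The sum in Lemma~\ref{lem1} runs over \emph{all} of $\mathbb{P}^{n-1}(\F_q)$, but the Rojas-Le\'{o}n sum only sees $V(\F_q)$. Points lying on $\{fg=0\}=C\cup D$ contribute $\chi(0)=0$ (or are simply excluded), and their number is at most $2\cdot\#\mathbb{P}^{n-2}(\F_q)$-type bounds; more relevantly, the points lost by intersecting with the hyperplane $\{\ell=0\}$ number at most $\#\mathbb{P}^{n-2}(\F_q)\le 2q^{n-2}$. This discrepancy is exactly the source of the additive $2q^{n-2}$ term in the stated bound, so I would choose $\ell$ generically (defined over $\F_q$) and absorb the hyperplane points into that error term. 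The main-term contribution $3(3+\max(\ldots)+d)^{N+r+2}q^{(m+\delta+2)/2}$ must then be bounded: with $d=4$, $e=1$, $N=n-1$, $r\le 1$, and $\max(a_i,e)\le \text{(small constant)}$, the base $3+\max(\ldots)+d$ is a fixed small integer, and raising it to the power $N+r+2\le n+2$ gives the $8^{n+1}$-type factor after a crude numerical estimate $3+\text{const}+4\le 8$.

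The genuinely delicate step is verifying the geometric hypotheses of Lemma~\ref{lem:RL}: that $X\cap H\cap Z = \mathbb{P}^{n-1}\cap\{fg=0\}\cap\{\ell=0\}$ has pure codimension $2$ in $X$, and controlling $\delta$, the dimension of its singular locus, so that the exponent $(m+\delta+2)/2$ yields $q^{(2n-3)/2}$ as claimed. Since $m=n-1$, I need $\delta = n-4$, i.e.\ the singular locus of $\{fg=0\}\cap\{\ell=0\}$ should have dimension exactly one less than the expected dimension $n-3$ of the complete intersection. The hypersurface $\{fg=0\}$ is singular along $C\cap D$ (where both factors vanish) and along the singular loci of $C$ and $D$ individually — but $C,D$ are smooth, so the only singularities of $fg$ come from $C\cap D$, a codimension-$2$ locus. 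Here I would invoke that $C$ and $D$ are \emph{distinct} smooth quadrics, so $C\cap D$ is a proper intersection of dimension $n-3$; slicing by a generic hyperplane $\{\ell=0\}$ drops this to dimension $n-4$, giving $\delta\le n-4$. This is the key place where distinctness and smoothness of $C$ and $D$ enter, and the bulk of the rigorous work lies in choosing $\ell$ to guarantee the pure-codimension-$2$ and singular-locus-dimension conditions simultaneously; I expect this transversality/genericity argument to be the main obstacle. Once these geometric inputs are secured, substituting $m=n-1$, $\delta=n-4$ gives $q^{(2n-3)/2}$, and combining with the two error sources produces the stated bound $3\cdot 8^{n+1}q^{(2n-3)/2}+2q^{n-2}$.
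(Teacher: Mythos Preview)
Your proposal is correct and follows essentially the same route as the paper: apply Lemma~\ref{lem:RL} with $X=\mathbb{P}^{n-1}$ ($r=0$, $N=m=n-1$), $H=fg$ ($d=4$), and $Z=\ell$ a hyperplane ($e=1$), then add back the trivially bounded contribution from $\{\ell=0\}$ to obtain the $+2q^{n-2}$ term. The paper makes your ``generic $\ell$'' precise---choosing $\ell$ not tangent to $C$, not tangent to $D$, and not containing any irreducible component of $C\cap D$---and uses a short counting argument to show such an $\ell$ exists over $\F_q$ once $q\ge 7$; this is exactly the transversality step you flagged as the main obstacle, and it is also where the hypothesis $q\ge 7$ enters. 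One small numerical point: commit to $r=0$ (not ``$r\le 1$''), since with $r=0$ the exponent $N+r+2=n+1$ and $3+\max(e)+d=3+1+4=8$ give precisely $3\cdot 8^{n+1}$.
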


\begin{proof} We apply Lemma~\ref{lem:RL} by choosing $X$, $H$, and $Z$ as follows:
\begin{enumerate}[(i)]
\item $X=\mathbb{P}^{n-1}$, $m=n-1$, $N=n-1$, and $r=0$;
\item $H(\mathbf{x})=f(\mathbf{x})g(\mathbf{x})$, and $d=4$;
\item $Z$ is a hyperplane $(e=1)$ which is neither tangent to $C=\{f=0\}$, neither tangent to $D=\{g=0\}$, and nor $Z$ contains any irreducible component of $C\cap D$. 
\end{enumerate}   
We justify why such $Z$ exists. The number of tangent hyperplanes defined over $\mathbb{F}_q$ to either $C$ or $D$ is at most $2(q^{n-2}+q^{n-3}+\cdots+1)$. Moreover, $C\cap D$ has at most $4$ distinct irreducible components, each with dimension $n-3$. The number of hyperplanes in $\mathbb{P}^{n-1}$ containing a linear component (of dimension $n-3$) is $q+1$; if the component is not linear, there is at most one hyperplane in $\mathbb{P}^{n-1}$ containing it. Thus, the number of ``bad" hyperplanes is at most $2(q^{n-2}+\cdots + 1) + 4(q+1)$. However, the total number of hyperplanes is $q^{n-1}+q^{n-2}+\cdots+1$, which is strictly bigger than $2(q^{n-2}+\cdots + 1) + 4(q+1)$ when $q\geq 7$ and $n \geq 3$. In fact, when $n\geq 5$, the intersection $C\cap D$ does not contain a hyperplane component because $C$ is parabolic (the maximum projective dimension of a linear space within $C$ is $\frac{n-3}{2}<n-3$); in this case, the number of ``bad" hyperplanes can be reduced from $2(q^{n-2}+\cdots + 1) + 4(q+1)$ to $2(q^{n-2}+\cdots + 1) + 2$. As a result, one can check that the hyperplane $Z$ exists when $q\geq 3$ and $n\geq 5$.

Observe that $H(\mathbf{x})=f(\mathbf{x})g(\mathbf{x})$ defines a quartic hypersurface. For one of the hypotheses, $X\cap H\cap Z = H\cap Z$ has pure codimension $2$ in $X=\mathbb{P}^{n-1}$ since a hyperplane section of a quartic is a quartic of one less dimension (hence pure codimension $2$ in $X$). Since the hyperplane $Z\cong\mathbb{P}^{n-2}$ does not contain any components of $C\cap D$, the dimension of $C\cap D\cap Z$ is $\delta = n-4$. Indeed, after a projective linear change of variables, the hyperplane $Z$ is defined by the equation $\{x_{n-1}=0\}$; then, $C_0 = C\cap Z$ and $D_0 = D\cap Z$ are two quadrics of one less dimension (with the same equations as $C$ and $D$ after substituting $x_{n-1}=0$). Since $Z$ is tangent to neither $C$ nor $D$, we have that $C_0$ and $D_0$ are smooth quadrics. Moreover, as $Z$ does not contain any irreducible component of $C\cap D$, it follows that $C_0\neq D_0$, and so $C_0\cap D_0 = C\cap D\cap Z$ has the expected dimension of $n-4$.

\begin{claim}
The singular locus of $X\cap H\cap Z$ is equal to $C\cap D\cap Z$; in particular, the dimension of the singular locus of $X\cap H\cap Z$ is $\delta = n-4$.
\end{claim}
\begin{poc}
By abuse of notation, we write $Z(x_0, \ldots, x_{n-1})=0$ for the defining equation of the hyperplane $Z$. The singular locus of $X\cap H\cap Z = H\cap Z$ is the set of points $P$ in $H\cap Z$ such that the following $2\times n$ Jacobian matrix has rank less than $2$:
$$
\operatorname{Jac}_{P}(f(\mathbf{x})g(\mathbf{x}), Z(\mathbf{x})) = 
\begin{pmatrix}
 f(P) \frac{\partial g}{\partial x_0}(P) + g(P) \frac{\partial f}{\partial x_0}(P) &  \cdots &  f(P) \frac{\partial g}{\partial x_{n-1}}(P) + g(P) \frac{\partial f}{\partial x_{n-1}}(P) \\ 
 \frac{\partial Z}{\partial x_0}(P) & \cdots & \frac{\partial Z}{\partial x_{n-1}}(P)
\end{pmatrix}.
$$
Since $P\in H\cap Z$, it follows that $f(P)=0$ or $g(P)=0$. We consider the following three cases. 
\begin{enumerate}
    \item If $f(P)=0$ and $g(P)=0$, then the first row vanishes, and the matrix has rank $1$. 
\item If $f(P)=0$ and $g(P)\neq 0$, then the first row is a nonzero scalar multiple of the gradient vector $\nabla f (P)=\left(\frac{\partial f}{\partial x_0}(P), \ldots, \frac{\partial f}{\partial x_{n-1}}(P) \right)$. In this case, the first row is not a multiple of the second row due to the condition (iii), as the hyperplane $Z$ is not tangent to the quadric $C=\{f=0\}$. Hence, the matrix has full rank (i.e., rank $2$) in this case.
\item If $f(P)\neq 0$ and $g(P)=0$, the same conclusion as in (2) holds by symmetry. 
\end{enumerate}
Therefore, the singular locus of $H\cap Z$ is precisely $C\cap D\cap Z$.
\end{poc}

By Lemma~\ref{lem:RL}, we have
\begin{equation}\label{eq:-Z}
\left|\sum_{P\in\mathbb{P}^{n-1}(\F_q)\setminus Z} \chi(f(P) g(P))\right|=\left|\sum_{P\in\mathbb{P}^{n-1}(\F_q)\setminus (H \cup Z)} \chi\bigg(\frac{f(P) g(P)}{Z(P)^4}\bigg)\right| \leq 3\cdot 8^{n+1} q^{(2n-3)/2}.  
\end{equation}
On the other hand, we have the trivial upper bound 
\begin{equation}\label{eq:Z}
\left|\sum_{P\in Z(\F_q)} \chi(f(P) g(P))\right| \leq q^{n-2}+q^{n-3} + \cdots + q + 1 < 2 q^{n-2}.
\end{equation}
Combining inequality~\eqref{eq:-Z} and inequality~\eqref{eq:Z}, we get
$$
\left|\sum_{P\in\mathbb{P}^{n-1}(\F_q)} \chi(f(P) g(P))\right| \leq 3\cdot 8^{n+1} q^{(2n-3)/2} +2q^{n-2}, 
$$
as required.
\end{proof}

\subsection{Proof of Theorem~\ref{thm:main-quadrics}}\label{sec:proof-main-result}
The main result Theorem~\ref{thm:main-quadrics} holds for $q<7$ because we can always increase the implicit constant in the error term. Hence, we assume $q\geq 7$ for the rest of the paper. Let $C=\{f=0\}$ and $D=\{g=0\}$ denote two smooth quadrics in $\mathbb{P}^{n-1}$. Set $n=2k+1$ for $k\in\mathbb{N}$. We only estimate the size of $S_{f,g}\colonequals S_4$; the proof for the other three sets is similar.

Recall that the set $S_{f, g}$ consists of all points $P$ in $\mathbb{P}^{n-1}(\F_q)$ such that $P$ is external to $C=\{f=0\}$, and $P$ is internal to $D=\{g=0\}$. Let $A$ and $B$ be the discriminants of the quadratic forms $f$ and $g$, respectively. By Lemma~\ref{lem:discriminant}, a point $P$ belongs to $S_{f, g}$ if and only if
$$
\chi(f(P)g(P))\neq 0, \quad \quad \chi((-1)^k A f(P))=1, \ \quad \quad \text{ and } \quad  \chi( (-1)^k B g(P))=-1,
$$
Thus, we have
$$
\#S_{f, g} = \sum_{\mathbf{x}\in\mathbb{P}^{n-1}(\F_q)} \chi(f^2(\mathbf{x}) g^2(\mathbf{x})) \cdot \left(\frac{1+\chi((-1)^kA\cdot f(\mathbf{x}))}{2} \right) \cdot \left(\frac{1-\chi((-1)^kB\cdot g(\mathbf{x}))}{2} \right).
$$
After simplifying and using properties of $\chi$, we obtain the following expression for $\# S_{f, g}$:
\begin{align*}
& \frac{1}{4} \sum_{\mathbf{x}\in\mathbb{P}^{n-1}(\F_q)} \chi(f^2(\mathbf{x}) g^2(\mathbf{x})) + \chi((-1)^kA f^3(\mathbf{x}) g^2(\mathbf{x})) - \chi((-1)^kB f^2(\mathbf{x})g^3(\mathbf{x})) - \chi(AB f^3(\mathbf{x}) g^3(\mathbf{x})) \\
=& \frac{1}{4} \sum_{\mathbf{x} \in\mathbb{P}^{n-1}(\F_q)} \chi(f^2(\mathbf{x}) g^2(\mathbf{x})) + \chi((-1)^kA f(\mathbf{x}) g^2(\mathbf{x})) - \chi((-1)^k B f^2(\mathbf{x})g(\mathbf{x})) - \chi(AB f(\mathbf{x}) g(\mathbf{x})).
\end{align*}
Next, we carefully analyze the four terms in the character sum above.

\medskip 

\textbf{Main Term.} We first examine the character sum involving $f^2(\mathbf{x})g^{2}(\mathbf{x})$ which will dominate the remaining terms.
\begin{equation}\label{eq:1st}
\frac{1}{4} \sum_{\mathbf{x}\in\mathbb{P}^{n-1}(\F_q)} \chi(f^2(\mathbf{x})g^2(\mathbf{x})) 
= \frac{1}{4} \sum_{x\in\mathbb{P}^{n-1}(\F_q)} 1 -  \frac{1}{4} \sum_{\substack{\mathbf{x}\in\mathbb{P}^{n-1}(\F_q) \\ f(\mathbf{x}) g(\mathbf{x})=0}} 1 = \frac{q^{n-1}}{4} + O(q^{n-2}).  
\end{equation}

\textbf{Secondary terms.} By Lemma~\ref{lem1},
\begin{equation}\label{eq:4th}
\left|\sum_{\mathbf{x}\in\mathbb{P}^{n-1}(\F_q)}\chi(f(\mathbf{x}) g(\mathbf{x}))\right| \leq 3\cdot 8^{n+1} q^{(2n-3)/2} +2q^{n-2}=O(q^{n-\frac{3}{2}}).    
\end{equation}

By symmetry, character sums involving $f(\mathbf{x}) g^2(\mathbf{x})$ and $f^2(\mathbf{x}) g(\mathbf{x})$ behave similarly; we only analyze the first one. We first rewrite the character sum:
$$
\sum_{\mathbf{x}\in\mathbb{P}^{n-1}(\F_q)} \chi(f(\mathbf{x}) g^2(\mathbf{x})) = 
\sum_{\mathbf{x}\in\mathbb{P}^{n-1}(\F_q)} \chi(f(\mathbf{x})) - \sum_{\substack{\mathbf{x}\in \mathbb{P}^{n-1}(\F_q) \\ g(\mathbf{x})= 0 }} \chi(f(\mathbf{x})).
$$
Since $C=\{f=0\}$ is smooth, we can apply Lemma~\ref{lem:Katz} with $d=2$:
$$
\left|\sum_{\mathbf{x}\in\mathbb{P}^{n-1}(\F_q)} \chi(f(\mathbf{x}))\right| \leq \frac{q^{n/2}}{q-1}=O(q^{n-2}).
$$
Note that  Lemma~\ref{lem:Katz} is stated for character sums over affine spaces, but we can apply it to the projective space in our setting. Indeed, $\chi(f(\mathbf{x}))=\chi(f(\lambda\mathbf{x}))$ for any non-zero scalar $\lambda$ as $\deg(f)=2$ and $\chi$ is the quadratic character. We divide by $q-1$ to account for this scaling by $\lambda$.

We also have the trivial bound:
$$
\left|\sum_{\substack{\mathbf{x}\in \mathbb{P}^{n-1}(\F_q) \\ g(\mathbf{x})= 0 }} \chi(f(\mathbf{x})) \right| \leq O(q^{n-2}).
$$
Therefore,
\begin{equation}\label{eq:2nd}
\sum_{\mathbf{x}\in\mathbb{P}^{n-1}(\F_q)} \chi(f(\mathbf{x}) g^2(\mathbf{x})) = O(q^{n-2}).
\end{equation}
By symmetry, we also have
\begin{equation}\label{eq:3rd}
\sum_{\mathbf{x}\in\mathbb{P}^{n-1}(\F_q)} \chi(f^2(\mathbf{x}) g(\mathbf{x})) = O(q^{n-2}).
\end{equation}

Combining the estimates in \eqref{eq:1st}, \eqref{eq:4th}, \eqref{eq:2nd}, and \eqref{eq:3rd}, we conclude that
$$
\#S_{f, g}= \frac{q^{n-1}}{4} + O(q^{n-\frac{3}{2}}), 
$$
as desired. $\hfill\square$

\section*{Acknowledgments}
The authors thank Seoyoung Kim and Semin Yoo for helpful discussions. The authors also thank the anonymous referees for their valuable comments and suggestions. The research of the second author was supported in part by an NSERC fellowship.

\bibliographystyle{abbrv}
\bibliography{main}

\end{document}